\documentclass[12pt,reqno]{article}

\usepackage{amsmath,amsthm,amssymb,amsfonts}

\usepackage[T1]{fontenc}
\usepackage[utf8]{inputenc}

\usepackage{graphicx}
\usepackage[usenames]{color}
\usepackage{verbatim}

\usepackage{authblk}

\usepackage{hyperref}

\theoremstyle{plain}
\newtheorem{theorem}{Theorem}

\newtheorem{proposition}[theorem]{Proposition}

\theoremstyle{definition}

\theoremstyle{remark}

\newtheorem*{remark*}{Remark}

\begin{document}

\title{Lonely Solids}

\author{Ivo Fagundes David de Oliveira, Tanya Khovanova, and Yogev Shpilman}

\maketitle

\begin{abstract}
A three-dimensional solid has the Rupert property if a congruent copy of the solid can pass through a hole cut through it without splitting it. We extend this idea to pairs of convex solids: two solids are called \textit{friends} if each can pass through a suitable hole in the other. A solid is called \textit{lonely} if it has no friends, including itself.

We show that a convex solid is lonely if and only if it has constant width. We also show that every convex solid that does not have constant width has a particularly simple friend: an arbitrarily long and arbitrarily thin rectangular cuboid. Finally, we prove that all non-constant-width convex solids lie in a single connected component of the friendship graph. More precisely, any two such solids are connected by a chain of at most ``three handshakes''.
\end{abstract}

\section{Introduction}

This story started many years ago with Prince Rupert's cube, the largest cube that can pass through a hole in a unit cube without splitting it into separate pieces. Surprisingly, Rupert's cube has a side length of $\frac{3\sqrt{2}}{4}\approx 1.06$. This means that a larger cube can pass through a smaller cube, as shown in Figure~\ref{fig:Rupertcube}.

\begin{figure}[ht!]
    \centering
    \includegraphics[width=250pt]{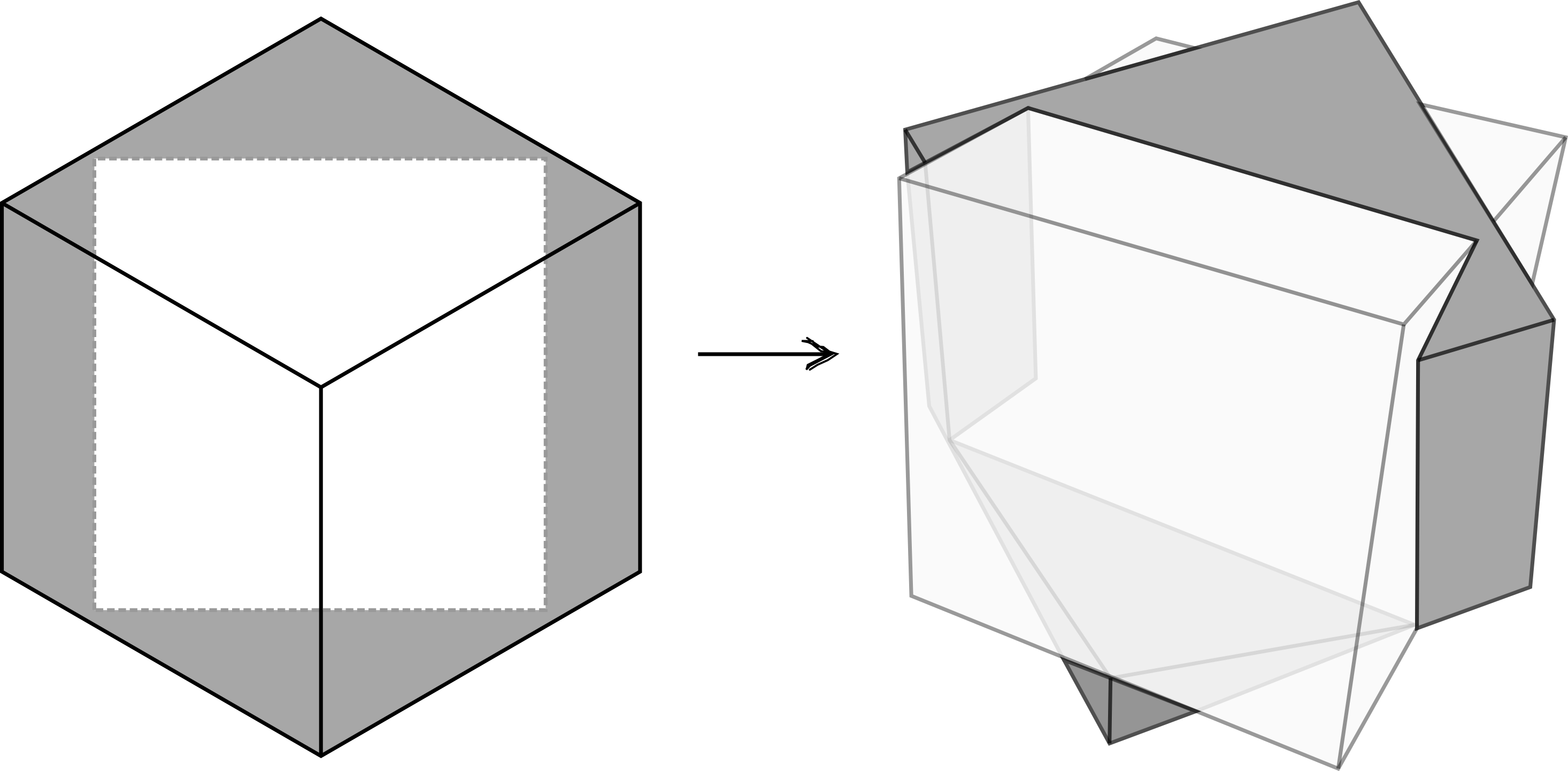}
    \caption{Rupert cube}
    \label{fig:Rupertcube}
\end{figure}

This motivates the following definition. A solid shape has the \textit{Rupert property} if its copy of the same size can pass through itself by a rigid translation along some direction, without separating it into disconnected pieces. It can be shown that any Platonic solid has the Rupert property. In August 2025, a preprint by Jakob Steininger and Sergey Yurkevich presented a convex polyhedron that does not have the Rupert property \cite{SY2025}. This polyhedron is now called a \textit{noperthedron}.

We now move to two solids. We say that solid $A$ can \textit{Rupert through} $B$ if $A$ can pass through $B$ without splitting $B$.

This story is continued in our book \cite{PuzzleBook2026}, where we proposed the following puzzle:

\begin{quote}
    Is it possible to drill a square hole in a sphere and a circular hole in a cube in such a way that the cube can pass through the sphere, and the sphere can pass through the cube?
\end{quote}

The answer to this puzzle is no, as proved in the book. However, we also introduced the following definition there: We say that two convex solids $A$ and $B$ are \textit{friends} if they can Rupert through each other. As we mentioned before, all Platonic solids are friends with themselves, and the noperthedron is not a friend of itself. Furthermore, we say that a convex solid $A$ is \textit{lonely} if it has no friends, including itself: really lonely.

We also showed in the book that a sphere turns out to be a lonely solid, and we ended this topic with an open question: is it possible to come up with another lonely solid? This paper is an answer to this open question.

Before answering, though, we need to add some definitions. The \textit{width} of a solid in a particular direction is the distance between two opposite parallel supporting planes that are perpendicular to the given direction. A \textit{solid of constant width} is a convex solid having the same width in every direction.

A sphere has constant width. But there are many more solids of constant width. One way to generate such a solid is to rotate a Reuleaux triangle about one of its axes of symmetry. It is known that among all bodies of revolution with the same constant width, the one with minimum volume is provided by such a shape \cite{CampiColesantiGronchi1996}.

There are also Meissner tetrahedra: two noncongruent shapes that can be produced by slightly modifying the Reuleaux tetrahedron \cite{MeissnerSchilling1912}. The Meissner tetrahedra are conjectured to have the minimum volume among all bodies of a given constant width \cite{BonnesenFenchel1934}.

\section{A Special Friend}

Let $L,h,\varepsilon>0$ satisfy $L \gg h \gg \varepsilon$, and let $R=R(L,h,\varepsilon)$ be a rectangular cuboid of dimensions $L \times h \times \varepsilon$, which we refer to as a \textit{social butterfly}. We refer to $L$ as the \textit{length}, and $\varepsilon$ as the \textit{thickness} of the cuboid.

\begin{proposition}
\label{prop:socbutt}
    Any non-constant-width convex solid has a social butterfly $B$ as a friend with arbitrarily large length and arbitrarily small thickness.
\end{proposition}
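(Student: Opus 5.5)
We have to prove two things. First, that $B$ can Rupert through the non-constant-width convex solid $K$. Second, that $K$ can Rupert through $B$.

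Throughout we use the standard projection criterion. Convex $A$ can Rupert through convex $K$ if there are directions $u$ and $v$ such that the orthogonal projection of $A$ along $u$ (after a rotation in the plane) fits in the interior of the projection of $K$ along $v$. We then drill the straight prism over the image of $A$'s shadow in direction $v$. Because that image lies strictly inside the shadow of $K$, a rim of $K$ remains around the hole, so $K$ stays connected.

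\textbf{Step 1: $B$ through $K$.} Projecting $B$ along its long edge gives an $h\times\varepsilon$ rectangle. Any projection of the solid $K$ has nonempty interior, so it contains a small disk. That disk contains an $h\times\varepsilon$ rectangle once $h$ (and hence $\varepsilon$) is small enough. The length $L$ is irrelevant here, because $B$ moves along its long axis. So this direction imposes only the constraint that $h$ be small, and it places no constraint on $L$.

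\textbf{Step 2: $K$ through $B$.} This is the heart of the matter and the only place non-constant width enters. Since $K$ does not have constant width, there are directions $d_1$ and $d_2$ with $w(d_1)<w(d_2)$, where $w$ denotes width.

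For $K$ to pass through $B$ we need some shadow of $K$ to fit inside the interior of some shadow of $B$. The large shadows of $B$ are the $L\times h$ face and, more generally, projections along directions close to the $\varepsilon$-axis. The $L\times h$ rectangle has a short side $h$. So we need a shadow of $K$ that fits in a strip of width less than $h$, and only length matters in the other direction since $L$ is huge. A shadow of $K$ along $u$ fits in a strip of width $s$ exactly when some direction $d\perp u$ has $w(d)<s$. The minimal width $w_{\min}$ is always attained, so $K$ fits through a hole in the $L\times h$ face as soon as $h>w_{\min}$.

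Step 1, however, requires $h$ to be small relative to $K$'s shadow. The plan is therefore to reconcile the two steps by choosing the hole in Step 1 cleverly instead of using a tiny disk. Project $K$ along a direction $v$ perpendicular to a direction $d_2$ of maximal width $w_{\max}$. That shadow has width $w_{\max}$ in direction $d_2$. Its inradius is not directly comparable, so we refine.

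We want a shadow of $K$ containing an $h\times\varepsilon$ rectangle with $\varepsilon\to0$. This only requires a segment of length greater than $h$ in the interior of the shadow, thickened slightly. The projection along $v\perp d_2$ contains a chord in direction $d_2$ of length $w_{\max}$: namely the projection of a segment joining the two supporting planes of $K$ perpendicular to $d_2$, for a suitable choice of $v$. Taking $v$ so that this chord projects to a chord of length close to $w_{\max}$ through interior points, we get interior segments of length arbitrarily close to $w_{\max}$. So any $h<w_{\max}$ works for Step 1, provided $\varepsilon$ is small.

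\textbf{Choice of parameters and main obstacle.} Pick $h$ with $w_{\min}<h<w_{\max}$, which is possible precisely because $K$ is not of constant width. Then choose $\varepsilon$ small enough for Step 1 and $L$ larger than the diameter of $K$ for Step 2. Both are free, which gives "arbitrarily large length and arbitrarily small thickness."

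The main obstacle is the rigorous version of Step 1's chord claim. We must check that the shadow contains an open thin rectangle of length greater than $h$, i.e. that the diametral chord in direction $d_2$ can be pushed into the interior. To handle this, I would take a chord realizing length greater than $h$ strictly, shrink it slightly toward an interior point of the shadow using convexity, and then fatten it by $\varepsilon$.
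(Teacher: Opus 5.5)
Your proposal is correct and is essentially the paper's proof. You choose $h$ strictly between the minimal and maximal widths, pass $K$ through the $L\times h$ face of $B$ by projecting along a direction perpendicular to a minimal-width direction (with $L$ larger than the diameter), and pass $B$ along its long axis through a shadow of $K$ that contains a segment of length $w_{\max}>h$, which you shrink toward an interior point and fatten by $\varepsilon$ to supply the detail behind the paper's remark that a slightly shorter segment fits in the interior of the projection.
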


\begin{proof}
Assume solid $A$ does not have constant width. Let $d^+$ and $d^-$ be its maximum and minimum widths. Consider direction $D^-$, so that the two supporting planes perpendicular to $D^-$ are at distance $d^-$. Consider a direction $D_1$, perpendicular to $D^-$ such that the two supporting planes perpendicular to $D_1$ are at a distance $d_1$, where $d^- \le d_1 \le d^+$.

Choose $D$ perpendicular to both $D^-$ and $D_1$; then the projection along $D$ lies inside the rectangle determined by the two pairs of supporting planes with side lengths $d^-$ and $d_1$. It follows that the projection of $A$ along $D$ is contained in a rectangle with sides $d^-$ and $d_1$.

Define a sufficiently thin rectangular cuboid $B = R(L,h,\varepsilon)$ of height $h$ such that $d^+ > h > d^-$ and length $L > d^+$, so the dimensions of this thin cuboid are $L$, $h$, and $\varepsilon$, where $\varepsilon$ is very small.

The rectangle with sides $d^-$ and $d_1$ fits into the $L\times h$ face of $B$: with $d^-$ along the side of length $h$, and $d_1$ along the side of length $L$. Since $h > d^-$ and $L > d^+ \ge d_1$, if we make such a hole in $B$ across the shortest direction, we will not split $B$, so solid $A$ can Rupert through $B$.

All that is needed is to show that $B$ can Rupert through $A$. To see this, notice that $B$ has a projection that is a rectangle with sides $h$ and $\varepsilon$. We can fit a segment of length $d^+$ along the longest direction in $A$. Since $h < d^+$, a slightly shorter segment of length $h$ can be placed in the interior of the projection, so for sufficiently small $\varepsilon$ a rectangle $h\times\varepsilon$ fits. Thus, $B$ can Rupert through $A$.
\end{proof}

\section{Lonely Solids}

\begin{theorem}
\label{thm:lonely}
A convex solid is lonely if and only if it is of constant width. 
\end{theorem}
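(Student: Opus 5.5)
The theorem splits into two implications. One of them is already done by Proposition~\ref{prop:socbutt}. If a convex solid $A$ does not have constant width, it has a friend, namely a social butterfly. So $A$ is not lonely. Contrapositively, every lonely convex solid has constant width.

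For the other implication, let $A$ have constant width $w$ and suppose, for contradiction, that $A$ has a friend $B$ (possibly $B=A$). I will use the standard reformulation of Ruperting. If $X$ can Rupert through $Y$, then there are directions $u,v$ such that the orthogonal projection of $X$ along $u$, after a rigid motion, fits inside the interior of the projection of $Y$ along $v$. Taking the tunnel as a straight prism of that projected shape, the hole leaves a ring of $Y$ around it, so $Y$ is not split. Strict containment is the condition; I will treat it as part of the definition, as the paper implicitly does.

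I will then compare widths of projections. Projection along a direction preserves widths in directions perpendicular to it. So the projection $P_A$ of $A$ is a planar convex set of constant width $w$. Strict containment of a set inside the interior of another forces every width of the inner set to be strictly smaller than the corresponding width of the outer one in the same planar direction. The outer width is measured after the rigid motion, but that only relabels directions.

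The chain of inequalities then runs as follows. Since $A$ Ruperts through $B$, some projection $P_B$ of $B$ contains a copy of $P_A$ in its interior. Hence every width of $P_B$ exceeds $w$, so the minimal width of $P_B$ is greater than $w$. Widths of $P_B$ are widths of $B$ in directions perpendicular to the projection direction. Therefore $B$ has at least one width greater than $w$, and in fact every width of $B$ in that plane of directions is greater than $w$.

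Conversely, since $B$ Ruperts through $A$, some projection $Q_B$ of $B$ fits strictly inside a projection of $A$, which has constant width $w$. So every width of $Q_B$, and hence every width of $B$ in the plane perpendicular to the second projection direction, is less than $w$.

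The final step, which I expect to be the main point, is to make these two families of directions meet. Each family is a great circle of directions on the sphere. Two great circles always intersect. At a common direction the width of $B$ would have to be both $>w$ and $<w$, which is impossible. With $B=A$ the same argument shows $A$ is not its own friend. So $A$ is lonely.

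The only delicate issues are making the Rupert reformulation precise and checking that the rigid motion in the plane does not spoil the width comparison. The latter is fine because "all widths $>w$" and "all widths $<w$" are statements about all directions in the respective circles, so they are invariant under planar rotation.
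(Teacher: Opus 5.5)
Your proof is correct and follows essentially the same route as the paper: Proposition~\ref{prop:socbutt} handles one implication, and for constant-width $A$ the two Rupert passages force the widths of $B$ to be $<w$ on one great circle of directions (the paper's plane $P_1$) and $>w$ on another (the paper's $P_2$, where it only uses $\ge w$), and these two circles must meet. Your version is, if anything, tidier than the paper's. You state the strict-containment convention explicitly, note that projection preserves widths in perpendicular directions, and explain why the planar rigid motion does not affect the width comparison.
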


\begin{proof} 
From Proposition~\ref{prop:socbutt}, we know that if a convex solid does not have constant width, it has a friend. What is left to prove is that constant-width solids are lonely.

Suppose our solid $A$ has a constant width $d$. Assume by contradiction that there exists some solid $B$ that is a friend of $A$. Suppose $B$ Ruperts through $A$ in some direction. It follows that the projection of $B$ in this direction can fit inside the projection of $A$ in this direction.  Since the hole cannot reach all the way to the boundary without separating $A$, the cross-section of the tunnel has width $d'<d$ in every direction of this plane $P_1$. Thus, while passing through $A$, the solid $B$ is contained in an infinite ``cylinder'' $C$ whose cross-section has width $d'<d$ in every direction in $P_1$.

Since $A$ can Rupert through $B$, it must also be able to go through this cylinder $C$. Since the tunnel must leave enough material to keep $A$ connected, the cross-section available to $A$ has to have width at least $d$ in every direction of the corresponding plane $P_2$.

The two planes $P_1$ and $P_2$ have an intersecting direction, in which the cylinder $C$ has to have a width less than $d$ and at least $d$ at the same time, creating a contradiction.
\end{proof}

\section{Three Handshakes}

We say that two friends are \textit{one handshake} away from each other.

In this final section, we complete the classification of the ``social behavior'' of convex solids by showing that all convex solids that are not of constant width belong to a single connected component under mutual Rupertability. Moreover, any two such solids are at most three handshakes away from each other.

\begin{theorem}
\label{thm:3hs}
Let $A$ and $B$ be two convex solids that are not of constant width. Then we can find a friend of $A$, denoted $F_A$, and a friend of $B$, denoted $F_B$, such that they are friends of each other.
\end{theorem}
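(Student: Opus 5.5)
The plan is to take $F_A$ and $F_B$ to be social butterflies given by Proposition~\ref{prop:socbutt}, and then show that two suitably chosen social butterflies are friends. Let $d_A^-<d_A^+$ be the minimum and maximum widths of $A$, and $d_B^-<d_B^+$ those of $B$. The proof of Proposition~\ref{prop:socbutt} gives a whole family of friends of $A$: any cuboid $R(L,h,\varepsilon)$ with $d_A^-<h<d_A^+$, $L>d_A^+$ and $\varepsilon$ sufficiently small. The same holds for $B$ with its own constraints on $h$. So I will set $F_A=R(L_1,h_1,\varepsilon_1)$ and $F_B=R(L_2,h_2,\varepsilon_2)$, with $h_1\in(d_A^-,d_A^+)$ and $h_2\in(d_B^-,d_B^+)$. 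The remaining freedom in $L_i$ and $\varepsilon_i$ will be used to make the two cuboids friends.

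The key step is to understand when two thin cuboids Rupert through each other. I will use the test already employed in Proposition~\ref{prop:socbutt}: $X$ can Rupert through $Y$ if some projection of $X$ fits strictly inside a projection of $Y$, in a way that leaves material around the hole. Passing $F_A$ through $F_B$ along its shortest edge direction needs a projection of size $h_1\times\varepsilon_1$, placed inside the $L_2\times h_2$ face of $F_B$. This works with margin if $h_1<L_2$ and $\varepsilon_1<h_2$, because the thickness $\varepsilon_1$ can be laid along the height and $h_1$ along the length. Symmetrically, $F_B$ passes through $F_A$ if $h_2<L_1$ and $\varepsilon_2<h_1$. A hole of this size in the interior of the large face does not split the cuboid, since it is bounded away from the edges. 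Concretely:
\begin{itemize}
\item take $L_1>\max(d_A^+,d_B^+)$ and $L_2>\max(d_A^+,d_B^+)$, so that $h_1<d_A^+<L_2$ and $h_2<d_B^+<L_1$;
\item then shrink $\varepsilon_1,\varepsilon_2$ so that each is below $\min(h_1,h_2)$ and also below the threshold needed for Proposition~\ref{prop:socbutt}.
\end{itemize}
Every condition asks $L$ to be large or $\varepsilon$ to be small, so they are all compatible with the ``arbitrarily large length, arbitrarily small thickness'' clause of Proposition~\ref{prop:socbutt}.

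The main obstacle is making sure the conditions from the two friendships, $F_A$ with $A$ and $F_A$ with $F_B$, don't conflict. In particular, the height $h_1$ is pinned to the interval $(d_A^-,d_A^+)$ and cannot be chosen freely. This is why I will orient the butterflies so that their long dimensions are compared only with heights, which are bounded by $d^+$, and their thicknesses only with heights, which are bounded below by $d^->0$. I will also check that the thickness threshold in Proposition~\ref{prop:socbutt} does not depend on $L$, so that enlarging $L_i$ afterward is harmless. With these choices $A$, $F_A$, $F_B$, $B$ form a chain of three handshakes.
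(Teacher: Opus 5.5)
Your proposal is correct and follows essentially the same route as the paper. Both take social butterflies from Proposition~\ref{prop:socbutt} as $F_A$ and $F_B$, enlarge the lengths and shrink the thicknesses, and pass each one through the other by placing its $h\times\varepsilon$ end projection in the other's large face, with $h$ along the length and $\varepsilon$ along the height. The only difference is cosmetic: the paper equalizes to a common $L=\max\{L_A,L_B\}$ and $\varepsilon=\min\{\varepsilon_A,\varepsilon_B\}$, whereas you keep $L_1,L_2,\varepsilon_1,\varepsilon_2$ separate and state the needed inequalities $h_1<L_2$, $\varepsilon_1<h_2$ (and symmetrically) explicitly.
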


\begin{proof}
Proposition~\ref{prop:socbutt} implies that we can find a friend $F_A$ of $A$ with dimensions $L_A > h_A > \varepsilon_A$ and a friend $F_B$ of $B$ with dimensions $L_B > h_B > \varepsilon_B$.

By the same proposition, we can expand the length and shorten the thickness of a social butterfly. Let $L = \max\{L_A,L_B\}$ and $\varepsilon = \min\{\varepsilon_A,\varepsilon_B\}$. Thus, we can assume that $F_A = R(L,h_A,\varepsilon)$ and $F_B = R(L,h_B,\varepsilon)$.

Observe that $F_A$ and $F_B$ are friends with each other. Indeed, to pass $F_A$ through $F_B$, use the projection of $F_A$ with sides $h_A$ and $\varepsilon$, place the side $h_A$ along the long side $L$ of $F_B$, and place the tiny side $\varepsilon$ along the side $h_B$. The same argument works in the other direction.
\end{proof}

Consequently, two convex solids that are not of constant width are at distance at most three in the friendship graph.

\begin{remark*}
Theorems~\ref{thm:lonely} and~\ref{thm:3hs} show that solids of constant width are not only lonely, but also isolated from a unique large ``social'' component consisting of all other convex solids.
\end{remark*}


\begin{thebibliography}{9}

\bibitem{BonnesenFenchel1934}
T.~Bonnesen and W.~Fenchel,
{\em Theorie der konvexen Körper},
Julius Springer, Berlin, 1934.

\bibitem{CampiColesantiGronchi1996}
S.~Campi, A.~Colesanti, and P.~Gronchi, Minimum problems for volumes of convex bodies, in \textit{Partial differential equations and applications}, Vol. 177 of Lecture Notes in Pure and Appl.\ Math., Dekker, New York, 1996, pp.\ 43--55.

\bibitem{MeissnerSchilling1912}
E.~Meissner and F.~Schilling,
Drei Gipsmodelle von Flächen konstanter Breite,
{\em Zeitschrift für angewandte Mathematik und Physik} {\bf 60}, 1912, 92--94.

\bibitem{PuzzleBook2026} I.~F.~D.~de Oliveira, T.~Khovanova, and Y.~Shpilman, {\em Mathematical Puzzles and Curiosities}, World Scientific, 2026.

\bibitem{SY2025} J.~Steininger and S.~Yurkevich, A convex polyhedron without Rupert's property, arXiv:2508.18475 [math.MG], 2025.

\end{thebibliography}
\end{document}